\newcommand {\protimes} {\mathbin{\widehat{\otimes}}}
\theoremstyle{definition}
\newtheorem{Theorem}{Theorem}
\newtheorem{Prop}{Proposition}
\newtheorem{Def}{Definition}
\newtheorem{Lemm}{Lemma}
\def\phi{\varphi}
\begin{document}
\title{A non-Archimedean counterpart of Johnson's theorem for discrete groups}
\author{Yuri Kuzmenko}
\date{}
%Amenability of group algebras for discrete groups in nonarchimedean case.
%ДОБАВИТЬ ССЫЛКУ НА SCHIKHOF NONARCHIMEDEAN HARMMONIC ANALYSIS??

\maketitle
\abstract{Let $K$ be a spherically complete field with a non-Archimedean valuation. We define a new version of $K-$amenability for discrete groups and show that the Banach $K-$algebra $l^1(G)$ is amenable iff $G$ is $K-$amenable in our sense.
%\listoftodos

\section{Introduction}
A Banach algebra $A$ is called amenable if for any $A-$bimodule $X$ every continious derivation with values in $X^*$ is inner. The term ``amenable'' is justified %reasoned
 by B.~Johnson's theorem: A locally compact group $G$ is amenable iff the group algebra $L^1(G)$ is amenable. The aim of this paper is to prove a counterpart of Johnson's theorem in the non-Archimedean case for discrete groups. %\todo{narch case for discr. grousp VS for discrete groups in the Narch}

%Let $K$ be a spherically complete field with non-Archimedean valuation, and $G$ be a locally compact group. %Let $L^1(G)$ be a Banach algebra of all integrable functions on $G$ equipped with the convolution product. \todo{Мы знаем, что такое неарх. L^1(G), другое дело, что в зависимости от группы она может существовать или не существовать. Считаете, что лучше заменить на l^1(G)?}

The notion of amenability for Banach algebras was intoduced by B.Johnson~\cite{Johnson} in 1972. A number of homological characterizations of amenable Banach algebras were obtained by Helemskii and Sheinberg~\cite{HelSheinberg} in 1979.  

Let $K$ be a spherically complete field with a non-Archimedean valuation, and $G$ be a locally compact group. 
Amenable algebras over $K$ are defined in the same way as amenable algebras over $\mathbb C$ (see below for details). 
On the other hand, the  traditional definition of an amenable group in terms of invariant means cannot be transferred verbatim from the classical case. A possible way to define a $K-$amenable group was suggested by W.Schikhof in \cite{NarchAm}. He also obtained a characterization of $K-$amenable groups in structure terms. 
However, the amenability of $l^1(G)$ is not equivalent to the Schikhof $K-$amenability of $G$ even in the case where $G$ is finite. For example, if $K=\mathbb Q_p$, then $\mathbb Z/p\mathbb Z$ is not Schikhof $K-$amenable, while $l^1(\mathbb Z/p\mathbb Z)$ is a amenable algebra (see Section \ref{result} for details). %Unfortunately, Johnson's theorem fails if we put Schikhof amenability instead of the classic one.
 
In this paper we give an alternative definition of $K-$amenability for discrete groups, namely Johnson amenability (see Definition~\ref{DefJAmenable} below). Similarly to Schikhof $K-$amenability, Johnson amenability reduces to the classical one when we replace $K$ by $\mathbb C$. Our main result states that $l^1(G)$ is amenable iff $G$ is Johnson $K-$amenable.

\section{Preliminaries}
For basic notions of non-Archimedean functional analysis we refer to \cite{LCS_NArch} and \cite{vanRooj}. Unless otherwise specified, we let $K$ be a spherically complete field with a non-Archimedean valuation.
\subsection{Preliminaries from relative homology \\ theory of Banach algebras}
We refer to \cite{HelHomBanEng} for basic notions in the homology theory of Banach algebras over $\mathbb C$. Most definitions of this theory make sense for Banach algebras over $K$. For the reader's convenience, we recall some definitions below.

%All relative homological notions are defined in the same way as in the Archimedean analysis. For more details one can see \cite{HelHomBanEng}. 

Let $A$ be any Banach algebra over $K$. All $A-$modules that we consider in the present paper are Banach $A-$modules, so for simplicity we always write ``$A-$module'' for ``Banach $A-$module''. If $E$ and $F$ are Banach spaces, then $E\protimes F$ stands for their completed tensor product (\cite[chapter 4]{vanRooj}. %\todo[noinline]{он просто дает определение не говоря прямо, что проект=инъект, м.б. посмотреть другую ссылку}).
If $M$ is a right $A-$module and $N$ is a left $A-$module, then $M\protimes_A N$ stands for their $A-$module tensor product, i.e., the quotient $(M\protimes N)/J$, where $J$ is the closed linear span of all elements of the form $m\cdot a\otimes n-m\otimes a\cdot n$ ($m\in M, n\in N$, and $a\in A$). Let $A_+$ denote the unitization of $A$. If $A$ is unital, then each unital $A-$bimodule can be regarded as an $A^e-$module, where $A^e=A\protimes A^{op}$ and $A^{op}$ is the algebra opposite to $A$.%\todo[noinline]{Дать определение для унитальной алгебры. If $A$ is unital then each $A-$bimodule can be regarded as ...?} %$H_n$ and $H^n$ denote the $n-$th homology and cohomology respectively.
%All modules are assumed to be Banach, so from now on we will write $A-$module instead of Banach $A-$module.

A complex of $A-$modules is called {\itshape admissible} if it splits as a complex of Banach spaces. A right $A-$module $M$ is called {\itshape flat} if the functor $M\protimes_A (-)$ takes 
admissible complexes to exact complexes. Flat left modules and flat bimodules are defined similarly. 

\begin{Def}
A Banach algebra $A$ called {\itshape amenable} if $A_+$ is flat as an $A-$bimodule.
\end{Def}

Let $\pi \colon A_+\protimes A_+ \to A_+$ and $\pi_0\colon A\protimes A^{op}\to A$ denote the multiplication maps given by $a\otimes b \mapsto ab$. We denote the kernel of $\pi$ by $I^{\Delta}$. %\todo{Сказать ли что $\pi_0$ является отображением $(A_+)^e-mod$}
%\begin{Def}
An $A-$module morphism is called an {\itshape admissible monomorphism} if it has a left inverse as a mapping of Banach spaces.%lest inverse bounded linear operator.
%\end{Def}
\begin{Def}
An $A-$module $I$ is called {\itshape injective} if for each admissible $A-$module monomorphism $i\colon Y\to X$ and each $A-$module morphism $\varphi\colon Y\to I$ there exists a morphism $\psi\colon X\to I$ such that the following diagram is commutative: 
$$
\xymatrix{
& X\ar@{-->}[dl]_{\psi}\\
I& Y\ar[l]^{\varphi}\ar[u]_{i}
}.
$$
\end{Def}
%\begin{Def}
%An $A-$module $I$ is called {\itshape injective} if the functor ${\mathrm Hom}(-, I)$ takes %\todo[noinline]{Можно ли сказать maps?} admissible complexes to exact complexes.
%\end{Def}

The following propositions are proved exactly as in the Archimedean case (cf. \cite[\S VII.2]{HelHomBanEng}). %\todo[noinline]{Check the paragraph number} %with \cite{1.20} 
\begin{Prop}
The following properties of a Banach algebra $A$ are equivalent% to its being amenable:
	\begin{enumerate}
	\item $A$ is amenable.
	\item For any $A-$bimodule $X$, every continious derivation of $A$ with values in $X^*$ is inner (i.e., for any derivation $D$ there exists an element $a\in A$ such that for all $x\in X^*$ we have $D(x)=ax-xa$). %$H^1(A,X^*)=0$).
%	\item For any $A-$bimodule $X$ and any $n>0$ $H^n(A,X^*)=0$.
%	\item For any $A-$bimodule $X$ and any $n>0$ $H_n(A,X)=0$ and $H_0(A;X)$ is Hausdorff.
	\item $A$ is flat as an $A-$bimodule and has a bounded approximate identity.
	\item $I^{\Delta}$ has a right bounded approximate identity.
	\item $\pi^*\colon A^*_+\to(A_+\protimes A_+)^*$ is an $A-$bimodule coretraction.% in $A-\text{mod}-A$.
%	\item $A$ has a bounded approximate identity and $\pi^*_0\colon A^*\to (A\protimes A)^*$ is a $A-$bimodule coretraction %in $A-\text{mod}-A$\todo[noinline]{Дать определение}.
%	\item There is an approximate diagonal in $A\protimes A$, i.e., a net $m_\alpha\in A\protimes A$ such that $am_\alpha-m_\alpha a\to 0$ and $\pi(m_\alpha)a\to a$ for any $a\in A$. 
%	\item There is a virtual diagonal in $(A\protimes A)^{**}$, i.e., an element $m\in (A\protimes A)^{**}$ such that for any $a\in A$ we have $am=ma$, $\pi_0^{**}(m)a=a\pi_0^{**}(m)=a$.
	\item There is a virtual diagonal in $(A\protimes A^{op})^{**}$, i.e., an element $m\in (A\protimes A^{op})^{**}$ such that for any $a\in A$ we have $$(a\otimes 1)m=(1\otimes a)m,$$ $$\pi_0^{**}(m) a=a\pi_0^{**}(m)=a.$$ \label{VirtDiag}
	\end{enumerate}
\end{Prop}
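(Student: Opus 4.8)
The plan is to establish the cycle of implications among the six conditions, mirroring the Archimedean proof in \cite[\S VII.2]{HelHomBanEng} while checking that each step survives the passage to a non-Archimedean spherically complete ground field. I would organize the argument as a chain $(1)\Leftrightarrow(3)$, $(1)\Rightarrow(5)\Rightarrow(6)\Rightarrow(4)\Rightarrow(1)$, and then treat $(2)$ separately by relating derivations to the splitting of the short exact sequence
$$
0\to I^{\Delta}\to A_+\protimes A_+ \xrightarrow{\pi} A_+ \to 0.
$$
The conceptual engine throughout is the standard dictionary between flatness of $A_+$ as a bimodule, injectivity of its dual, splitting of this sequence, and the existence of a (virtual) diagonal; each arrow in the dictionary is a formal homological manipulation that does not refer to the Archimedean structure of the scalars, so most of the work is bookkeeping.

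First I would prove $(1)\Leftrightarrow(3)$: flatness of $A_+$ is equivalent to flatness of $A$ together with the existence of a bounded approximate identity, using that $A_+ = A\oplus K$ as a bimodule and that the unit summand is automatically flat. Next, for $(1)\Rightarrow(5)$, I would dualize: $A_+$ is flat as an $A$-bimodule iff $A_+^*$ is injective as an $A^e_+$-module, and injectivity of $A_+^*$ forces the admissible monomorphism $\pi^*\colon A_+^*\to (A_+\protimes A_+)^*$ to split by an $A$-bimodule morphism, i.e.\ to be a coretraction. For $(5)\Rightarrow(6)$ I would apply the functor of second duals: a bimodule retraction $\rho$ for $\pi^*$ produces, upon evaluating $\rho^*$ on the canonical image of the identity, an element $m\in(A\protimes A^{op})^{**}$ satisfying the two virtual-diagonal identities in condition~\ref{VirtDiag}. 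The implication $(6)\Rightarrow(4)$ comes from pushing the virtual diagonal forward through $\pi_0$ to manufacture a right bounded approximate identity for $I^{\Delta}$, and $(4)\Rightarrow(1)$ closes the loop by the usual relative-homological criterion that a right bounded approximate identity in $I^{\Delta}$ yields the required splitting witnessing flatness of $A_+$.

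The equivalence with $(2)$ I would handle via the identification of bounded derivations $D\colon A\to X^*$ with bimodule complements to the multiplication-induced sequence: inner derivations correspond exactly to coboundaries, so \emph{every} derivation into a dual module being inner is equivalent to the vanishing of the relevant first cohomology for all dual coefficients, which is in turn equivalent to flatness of $A_+$ through the standard reduction to the module $I^{\Delta}$ and its dual. Here I would invoke that dual modules $X^*$ are the natural coefficients and that the Hahn--Banach machinery needed to realize derivations as coboundaries is available for Banach spaces over $K$.

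The main obstacle I expect is precisely the point where the Archimedean proof silently uses Hahn--Banach–type extension or the weak-$*$ compactness of dual balls. In the non-Archimedean setting the relevant substitute is that $K$ is \emph{spherically complete}: this is exactly the hypothesis under which the non-Archimedean Hahn--Banach theorem guarantees norm-preserving extension of functionals and, consequently, the admissible splittings and the existence of the second-dual element $m$ in $(A\protimes A^{op})^{**}$. I would therefore be careful, at the $(1)\Rightarrow(5)$ and $(5)\Rightarrow(6)$ steps, to cite the spherically-complete Hahn--Banach theorem from \cite{vanRooj} rather than appeal to compactness arguments, since the latter have no direct analogue over $K$; once this single structural input is secured, the remaining implications are purely algebraic and transfer without change.
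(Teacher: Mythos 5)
Your proposal follows exactly the route the paper itself takes: it offers no independent proof but asserts that the argument of \cite[\S VII.2]{HelHomBanEng} carries over verbatim, with spherical completeness of $K$ supplying the Hahn--Banach and Goldstine-type ingredients. Your outline of the implication cycle and your identification of the one genuinely non-Archimedean input (norm-preserving extension of functionals over a spherically complete field, replacing weak-$*$ compactness arguments) match the paper's intent, so the proposal is correct and essentially the same approach.
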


%\begin{Cor}
%Let $A$ be an Banach algebra with the identity and there is a bimodule morphism $A\to (A\protimes A)^{**}$, such that $\pi_0^{**}()$ then $A$ is amenable.
%\end{Cor}

\begin{Prop}\label{1.13}
If $M$ is an injective module and $\varphi\colon M\to N$ is an admissible monomorphism, then $\varphi$ has a left inverse $A-$module morphism.
\end{Prop}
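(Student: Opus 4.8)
The plan is to read off the splitting directly from the universal property that defines an injective module, taking the relevant test morphism to be the identity on $M$. No construction is needed beyond a single application of the definition.

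First I would fix the instance of the injectivity diagram adapted to our data. By hypothesis $\varphi\colon M\to N$ is an admissible monomorphism of $A$-modules, so in particular it is an admissible $A$-module monomorphism in exactly the sense required by the definition of injectivity (it admits a left inverse merely as a continuous map of Banach spaces). I would therefore let $\varphi$ itself play the role of the admissible monomorphism appearing in the definition, so that its source becomes $M$ and its target (the ambient module) becomes $N$. Then, since $M$ is injective, I apply its defining property to the $A$-module morphism $\mathrm{id}_M\colon M\to M$, which occupies the slot of the morphism into the injective module, the injective module being $M$ itself. Injectivity of $M$ then produces an $A$-module morphism $\psi\colon N\to M$ rendering the triangle commutative, i.e.\ with $\psi\circ\varphi=\mathrm{id}_M$. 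This $\psi$ is precisely a left-inverse $A$-module morphism for $\varphi$, which is what the proposition asks for.

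There is essentially no analytic obstacle in this argument: the statement is a purely formal consequence of the definitions, the non-Archimedean hypotheses on $K$ never enter, and the reasoning is verbatim the same as in the Archimedean case. The only point demanding a little care is bookkeeping, namely keeping the two uses of the symbol $\varphi$ apart: the admissible monomorphism $\varphi\colon M\to N$ of the present proposition must be plugged into the slot labelled $i$ in the definition of injectivity, while the symbol $\varphi$ occurring in that definition is here taken to be $\mathrm{id}_M$. Once the diagram is set up with these identifications, the conclusion is immediate.
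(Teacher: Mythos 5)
Your proof is correct and is exactly the standard argument the paper has in mind (the paper omits the proof, citing that it is identical to the Archimedean case): apply the definition of injectivity with $i=\varphi$, $Y=M$, $X=N$, and the test morphism $\mathrm{id}_M$, obtaining $\psi\colon N\to M$ with $\psi\circ\varphi=\mathrm{id}_M$. No issues.
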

\begin{Prop}\label{1.14}
$M$ is a flat left $A-$module iff $M^*$ is an injective right $A-$module.
\end{Prop}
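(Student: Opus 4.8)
The plan is to derive the equivalence from the standard tensor--hom adjunction together with the non-Archimedean Hahn--Banach theorem, which is available precisely because $K$ is spherically complete. Write $\mathcal{B}_A(N,L)$ for the space of bounded right $A$-module morphisms between right $A$-modules, and recall that $M^*$ carries the right $A$-module structure $(f\cdot a)(m)=f(a\cdot m)$. The first step is to establish, for every right $A$-module $N$, a natural isomorphism
\[
(N\protimes_A M)^*\;\cong\;\mathcal{B}_A(N,M^*),
\]
sending a functional $\phi$ to the morphism $n\mapsto \phi(n\otimes -)\in M^*$. This is purely formal: bounded functionals on $N\protimes_A M$ correspond to bounded $A$-balanced bilinear maps $N\times M\to K$, and the balancing condition is exactly the requirement that $n\mapsto\phi(n\otimes-)$ respect the right $A$-action. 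Crucially, the isomorphism is natural in $N$, so that the map induced on duals by a morphism $i\colon Y\to X$ corresponds to the restriction map $\mathcal{B}_A(X,M^*)\to\mathcal{B}_A(Y,M^*)$, $\psi\mapsto\psi\circ i$.

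For the implication \emph{$M$ flat $\Rightarrow$ $M^*$ injective}, I would take an admissible monomorphism of right $A$-modules $i\colon Y\to X$ and complete it to an admissible short exact sequence $0\to Y\xrightarrow{i} X\to X/Y\to 0$. Applying $(-)\protimes_A M$ and using flatness of $M$ yields an exact sequence of Banach spaces. Dualizing this sequence and invoking Hahn--Banach (valid over spherically complete $K$) shows that $(i\otimes\mathrm{id}_M)^*$ is surjective. Under the isomorphism above this is exactly surjectivity of the restriction map $\mathcal{B}_A(X,M^*)\to\mathcal{B}_A(Y,M^*)$, i.e. every morphism $Y\to M^*$ extends along $i$; hence $M^*$ is injective.

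For the converse, \emph{$M^*$ injective $\Rightarrow$ $M$ flat}, I would start from an arbitrary admissible complex of right $A$-modules and apply $(-)\protimes_A M$ to obtain a complex $C_\bullet$ of Banach spaces, whose exactness is the goal. Passing to duals and using naturality, $C_\bullet^*$ is identified with the complex obtained by applying $\mathcal{B}_A(-,M^*)$ to the original admissible complex. Since $M^*$ is injective, this functor carries admissible complexes to exact ones, so $C_\bullet^*$ is exact. It then remains to descend exactness from $C_\bullet^*$ back to $C_\bullet$: if some $x\in\ker d_n$ were not in the closure of $\mathrm{im}\,d_{n+1}$, Hahn--Banach would produce a functional vanishing on $\mathrm{im}\,d_{n+1}$ but not at $x$, contradicting exactness of $C_\bullet^*$; the non-Archimedean closed range theorem then upgrades the closure to equality. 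Thus $C_\bullet$ is exact and $M$ is flat.

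The formal backbone --- the adjunction isomorphism --- is valid over any $K$ and needs no completeness hypothesis. The genuine work, and the step I expect to be the main obstacle, is the passage between exactness of a complex and exactness of its dual, especially the reverse direction used in the converse. Both directions rest on the non-Archimedean Hahn--Banach theorem and its consequences (extension of functionals, together with the closed range and open mapping theorems), and it is exactly here that spherical completeness of $K$ is indispensable. Once these functional-analytic inputs are in place, the argument runs parallel to the Archimedean case treated in \cite{HelHomBanEng}.
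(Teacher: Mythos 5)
Your argument is correct and is essentially the proof the paper intends: the paper omits the proof of Proposition~\ref{1.14} entirely, saying only that it is ``proved exactly as in the Archimedean case (cf.\ \cite[\S VII.2]{HelHomBanEng})'', and the standard Archimedean proof is exactly your combination of the adjunction $(N\protimes_A M)^*\cong\mathcal{B}_A(N,M^*)$ with the equivalence between exactness of a Banach complex and exactness of its dual. The functional-analytic inputs you identify (Ingleton's Hahn--Banach theorem, the open mapping and closed range theorems) are indeed the points where spherical completeness of $K$ is used, and they are available here, so no genuine gap remains.
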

%If we assume, that $K$ is spherically complete, then the proof of these proposition is the same as in classical case. Reader can see the later in \cite{HelHomBanEng}.

\begin{Prop}\label{flat}
If $A$ is an amenable Banach algebra, then each right (left) $A-$module is flat.
\end{Prop}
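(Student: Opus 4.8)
The plan is to reduce the flatness of an arbitrary one-sided module to an injectivity statement for its dual, and then to establish that injectivity by an averaging argument driven by a virtual diagonal. First I would invoke Proposition~\ref{1.14}: a right $A$-module $M$ is flat if and only if $M^*$ is injective as a left $A$-module, and its mirror image handles left modules. Thus it suffices to prove that for \emph{every} right $A$-module $M$ the dual $M^*$ is injective. I would verify injectivity straight from the definition: given an admissible monomorphism of left $A$-modules $i\colon Y\to X$, a Banach-space left inverse $\sigma\colon X\to Y$ (available since $i$ is admissible, with $\sigma i=\mathrm{id}$), and a module morphism $\varphi\colon Y\to M^*$, I must produce a module morphism $\psi\colon X\to M^*$ with $\psi i=\varphi$.

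The engine is the virtual diagonal $m\in(A\protimes A^{op})^{**}$ furnished by amenability (see item~\ref{VirtDiag}). The naive extension $\varphi\sigma\colon X\to M^*$ restricts to $\varphi$ on $Y$ but is not a module morphism; I repair this by averaging it against $m$. Concretely, for $x\in X$ and $\xi\in M$ I set
$$\psi(x)(\xi)=\bigl\langle m,\ a\otimes b\mapsto \varphi\bigl(\sigma(a\cdot x)\bigr)(\xi\cdot b)\bigr\rangle,$$
where the inner expression is a bounded bilinear form on $A\times A$, hence a legitimate argument for the functional $m$. The two defining properties of $m$ then perform the two tasks. The centrality relation $(a\otimes 1)m=(1\otimes a)m$ makes $\psi$ a left-module morphism, since it transports the action of $A$ between the ``$a\cdot x$'' slot and the ``$\xi\cdot b$'' slot; the normalization $\pi_0^{**}(m)\,a=a=a\,\pi_0^{**}(m)$ gives $\psi i=\varphi$, after using that $i$ and $\varphi$ are module maps, that $\sigma i=\mathrm{id}$, and that $\pi_0(a\otimes b)$ collapses the averaged expression through $\pi_0^{**}(m)$. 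By Proposition~\ref{1.14} this yields flatness of every right module, and the left-module case is identical with the roles of the two actions exchanged.

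The main obstacle is the bookkeeping in the second dual: one must track the $A^{op}$-ordering carefully when pairing against $m$, and, in the normalization step, upgrade the identities $a\,\pi_0^{**}(m)=a=\pi_0^{**}(m)\,a$ (stated for $a\in A$) to the relation $\xi\cdot\pi_0^{**}(m)=\xi$ for a general module element $\xi$, which is exactly where the bounded approximate identity built into amenability is used. I expect no genuinely new non-Archimedean difficulty: the virtual diagonal is supplied by the cited equivalences, the splitting $\sigma$ is part of the admissibility data (underpinned by Hahn--Banach over the spherically complete field $K$), and the remaining verification is purely algebraic, so it transcribes from the Archimedean proof. As an alternative framing one may endow $M$ with the trivial left action to regard it as a bimodule $\widetilde M$, observe that $\widetilde M^{\,*}=M^*$ carries the trivial right action, and deduce the claim from the bimodule statement that every dual bimodule over an amenable algebra is injective; both routes use the virtual diagonal in the same way.
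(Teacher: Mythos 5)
The paper offers no argument for this proposition beyond the citation of Helemskii, \S VII.2, so your proposal has to be measured against the standard Archimedean proof --- and your overall route (reduce to injectivity of $M^*$ via Proposition~\ref{1.14}, then average a Banach-space splitting against a virtual diagonal) is indeed that standard route, and the ordering issues you flag in the pairing with $m\in(A\protimes A^{op})^{**}$ really are only bookkeeping. There is, however, one step that fails as written: the verification that $\psi i=\varphi$. Unwinding your formula at $x=i(y)$, using that $i$ and $\varphi$ are module maps and $\sigma i=\mathrm{id}$, one arrives at $\psi(i(y))(\xi)=\langle \pi_0^{**}(m),\,h\rangle$ with $h(c)=\varphi(y)(\xi\cdot c)$, and to conclude that this equals $\varphi(y)(\xi)$ you need precisely $\xi\cdot\pi_0^{**}(m)=\xi$. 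You assert that the bounded approximate identity supplies this for a \emph{general} module element $\xi$, but it does not: it can only do so for $\xi$ in the essential part $\overline{M\cdot A}$. If $M$ has a nonzero annihilator part --- for instance $M$ a Banach space with the zero action of $A$, which is a legitimate Banach $A$-module in this paper's (and Helemskii's) conventions --- then $\xi\cdot b=0$ for all $b$, so your $\psi$ is identically zero, while $\varphi$ need only vanish on $\overline{A\cdot Y}$ and can be nonzero elsewhere; hence $\psi i\neq\varphi$.

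The gap is localized and fixable in either of two standard ways. (i) Decompose: the identity of $A_+$ acts on every one-sided module as an idempotent commuting with all module morphisms, so every module splits canonically as (essential part) $\oplus$ (annihilator part); your averaging argument handles the essential summand, and the annihilator summand is handled by a purely Banach-space extension, since the functor ``annihilator part'' takes admissible complexes to admissible complexes. (ii) Replace the pure average by the corrected extension
$\psi(x)(\xi)=\varphi(\sigma(x))(\xi)-\bigl\langle m,\ a\otimes b\mapsto\bigl(\varphi(\sigma(a\cdot x))-a\cdot\varphi(\sigma(x))\bigr)(\xi\cdot b)\bigr\rangle$,
whose correction term vanishes on $i(Y)$ without any appeal to $\xi\cdot\pi_0^{**}(m)=\xi$, and whose module property follows from the two virtual-diagonal identities of Proposition~1 by the usual computation. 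With either repair the proof goes through, and, as you say, nothing specifically non-Archimedean intervenes beyond what the paper has already secured: the splitting $\sigma$ is part of the admissibility data (no Hahn--Banach needed), and the virtual diagonal is supplied by the stated equivalences.
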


The proof of the next proposition is also almost the same as in the Archimedean case, but for the convenience of the reader we will give it. %a proof.
\begin{Prop}\label{1.20}
Let $I$ be a closed left ideal in $A_+$. Suppose the complex of right $A-$modules $J=(0\to (A_+/I)^*\stackrel{j^*}{\longrightarrow} A_+^*\stackrel{i^*}{\longrightarrow} I^*\to 0)$ is admissible, where $i$ and $j$ are the natural injection and projection respectively. Then the following conditions are equivalent:
\begin{enumerate}
\item $I$ has a right bounded approximate identity.
\item $J$ splits in $\text{mod}-A$.%\todo[noinline]{Расписать подробнее в доказательстве или дать перед леммой второе определение инъективности. Делать ли что-нибудь со словом splits?}
\item $A_+/I$ is a flat left $A-$module.
\item $(A_+/I)^*$ is an injective right $A-$module.
\end{enumerate}
%If, an addition, the complex $0\to I\to A_+\to A_+/I\to 0$ is weakly admissible, then all three conditions are equivalent to the flatness of $A_+/I$ and injectivity of $A_+/I$
\end{Prop}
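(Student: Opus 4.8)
The plan is to separate the four conditions into a purely homological cluster $\{(2),(3),(4)\}$, whose members follow from one another by the formal machinery already recorded, and the analytic condition $(1)$, whose equivalence with the rest is the real content. Throughout I identify $(A_+/I)^*$ with the annihilator $I^\perp=\{f\in A_+^*: f|_I=0\}$, so that $j^*$ becomes the inclusion $I^\perp\hookrightarrow A_+^*$ and $i^*$ the restriction $f\mapsto f|_I$; the hypothesis that $J$ is admissible says precisely that this short exact sequence of right $A$-modules splits as a sequence of Banach spaces.

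The homological cluster is immediate. Equivalence $(3)\Leftrightarrow(4)$ is Proposition~\ref{1.14} applied to the left module $M=A_+/I$. For $(4)\Rightarrow(2)$, the map $j^*$ is an admissible monomorphism, so if $(A_+/I)^*$ is injective then Proposition~\ref{1.13} furnishes a left inverse right $A$-module morphism for $j^*$, which is exactly a splitting of $J$ in $\mathrm{mod}\text{-}A$. For $(2)\Rightarrow(4)$ I would use that $A_+^*$ is injective as a right $A$-module (the dual of the flat module $A_+$, cf.\ Proposition~\ref{1.14}): a splitting of $J$ exhibits $(A_+/I)^*$ as a right $A$-module retract of $A_+^*$, and a retract of an injective module is injective.

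It remains to tie in $(1)$. For $(1)\Rightarrow(2)$ I would write the splitting down explicitly. Let $(e_\nu)\subset I$ be a right bounded approximate identity with $\|e_\nu\|\le C$, and define a candidate retraction $\rho\colon A_+^*\to I^\perp$ by $\rho(f)(x)=\lim_\nu f\bigl(x-xe_\nu\bigr)$ for $x\in A_+$. Since $I$ is a left ideal, $xe_\nu\in I$, and three checks are then formal: for $f\in I^\perp$ every term vanishes, so $\rho(f)=f$; for $y\in I$ one has $ye_\nu\to y$, so $\rho(f)(y)=0$ and $\rho(f)\in I^\perp$; and right $A$-linearity follows at once from associativity, since both $\rho(f\cdot a)(x)$ and $(\rho(f)\cdot a)(x)$ reduce to $\lim_\nu f(ax-axe_\nu)$. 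The ultrametric inequality gives $|\rho(f)(x)|\le\max(1,C)\,\|f\|\,\|x\|$, so $\rho$ is bounded. The one nonformal point is that the limit must be taken \emph{coherently in all pairs} $(f,x)$ at once: the correct object is a single functional $E\in A_+^{**}$, of norm at most $C$, realizing $E(\psi)=\lim_\nu\psi(e_\nu)$ on the relevant functionals, so that $\rho(f)(x)=f(x)-E(\theta_{f,x})$ with $\theta_{f,x}(c)=f(xc)$.

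Producing this $E$ is the crux and the only place where the field really enters. In the Archimedean case $E$ is any weak-$*$ cluster point of the bounded net $(e_\nu)$ in the ball of radius $C$ of $A_+^{**}$, supplied by the Banach--Alaoglu theorem. Over a non-Archimedean $K$ this theorem is unavailable, and it is exactly here that the spherical completeness of $K$ must be invoked to manufacture the limiting functional. The converse $(2)\Rightarrow(1)$ runs backwards: a module splitting yields an element of the bidual acting as a right identity on $I$, from which one must extract an honest bounded net $(e_\nu)\subset I$ with $ye_\nu\to y$. This extraction is the step I expect to be the main obstacle, since the classical argument passes from weak-$*$ approximation to a genuine bounded approximate identity through Goldstine's theorem together with a convexity (Mazur) argument, neither of which survives verbatim in the non-Archimedean world; the spherical completeness of $K$ is what must replace them.
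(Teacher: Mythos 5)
Your treatment of the cluster $\{(2),(3),(4)\}$ is correct and agrees with the paper, which disposes of it in one line via Propositions \ref{1.13} and \ref{1.14}; your remark that $(2)\Rightarrow(4)$ additionally uses the injectivity of $A_+^*$ is a point the paper leaves implicit. The genuine problem is that both implications involving $(1)$ are left open. For $(1)\Rightarrow(2)$ your explicit retraction $\rho(f)(x)=\lim_\nu f(x-xe_\nu)$ is fine \emph{modulo} the existence of the limiting functional $E\in A_+^{**}$, and for $(2)\Rightarrow(1)$ you need to extract a bounded net in $I$ from an element of $I^{**}$; in both places you write that spherical completeness ``must be invoked'' without invoking it. Since these are exactly the points where the non-Archimedean setting could differ from the classical one, the proof is incomplete where it matters.

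The paper closes the gaps with tools you declare unavailable. For $(2)\Rightarrow(1)$ it takes a right inverse $\rho$ of $i^*$ in $\text{mod}-A$, forms $\hat e=\rho^*(e)\in I^{**}$, and applies the \emph{non-Archimedean Goldstine theorem} (\cite[Corollary 7.4.8]{LCS_NArch}, valid over spherically complete $K$) to get a bounded net $(e_\nu)\subset I$ converging weak$^*$ to $\hat e$; weak$^*$ continuity of left multiplication gives $ae_\nu\to a$ weakly for $a\in I$, i.e.\ a weak right bounded approximate identity, which is upgraded to a genuine one by \cite[\S 11, Proposition 4]{Bonsall-Duncan}. So your assertion that Goldstine does not survive in the non-Archimedean world is mistaken --- its surviving form is precisely where spherical completeness enters, and it does all the work. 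For the remaining link the paper proves $(1)\Rightarrow(4)$ directly by citing \cite[Theorem VII.1.5]{HelHomBanEng} rather than constructing your retraction; if you insist on your construction you must actually produce $E$ (a weak$^*$ cluster point of $(e_\nu)$ in $A_+^{**}$), and over a field that is not spherically complete the relevant compactness-type property of the bidual ball can fail, so this is not a formality that can be waved through.
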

\begin{proof}
From Propositions \ref{1.13} and \ref{1.14} we see that conditions (2), (3) and (4) are equivalent.

$(2)\Rightarrow (1)\colon$ Let $\rho$ be a right inverse morphism to $i^*$. Then $\rho^{*}\colon A_+^{**}\to I^{**}$ is a left $A-$module morphism and $\rho^{*}(a)=a$ for all $a\in I^{**}$. Let $\hat e=\rho^*(e)$, where $e$ is the identity in $A_+\subset A_+^{**}$. By Goldstine's theorem (\cite[Corollary 7.4.8]{LCS_NArch}), there exists a bounded net $e_\nu$ that converges to $\hat e$ in the weak$^*$ topology. For any $a\in A$ the multiplication by $a$ is continuous on $I^{**}$ in the weak$^*$ topology. Hence the net $a\cdot e_\nu$ converges to $a\cdot \hat e=a\cdot \rho^*(e)=\rho^*(a\cdot e)=\rho^*(a)$. If $a\in I\subset I^{**}$, then $\rho^*(a)=a$. So $e_{\nu}$ is a weak right bounded approximate idenitity in $I$. By \cite[\S 11, Proposition 4]{Bonsall-Duncan}, $I$ has a right bounded approximate identity.%!!Spherically commplete field

$(1)\Rightarrow (4)$ is a corollary from \cite[Theorem VII.1.5]{HelHomBanEng}.
\end{proof}

\subsection{Preliminaries from Hopf algebra theory}
Let us recall some definitions and facts on Hopf Banach algebras. For details, see \cite{CartanEilenberg} (for the algebraic case), \cite[Section 2]{PirkStbFlat} (for Hopf Banach algebras) and \cite{Diarra} (for non-Archimedean Hopf Banach algebras)

\begin{Def}
A Hopf Banach algebra is a unital Banach $K-$algebra $A$ together with algebra homomorphisms
\begin{align*}
\Delta \colon A\to A\protimes A &\text{ (comultiplication),}\\
\varepsilon \colon A\to K &\text{ (counit),}\\
\end{align*}
and an algebra antihomomorphism
\begin{align*}
S \colon A\to A &\text{ (antipode)}
\end{align*}
such that the following diagrams are commutative:
$$
\xymatrix{
A\protimes A\protimes A & A\protimes A \ar[l]_-{\Delta\protimes \text{id}_A} & & K\protimes A & A\protimes A \ar[l]_{\varepsilon\protimes \text{id}_A} \ar[r]^{\text{id}_A\protimes \varepsilon}& A\protimes K\\
A\protimes A \ar[u]^{\text{id}_A\protimes\Delta} & A \ar[u]_{\Delta} \ar[l]_{\Delta} & & & A\ar[lu]^{l_A^{-1}} \ar[u]_{\Delta} \ar[ru]_{r_A^{-1}} & 
}
$$
$$
\xymatrix{
A\protimes A \ar[d]_{S\protimes \text{\text{id}}_A} & A \ar[l]_-{\Delta} \ar[r]^-{\Delta} \ar[d]^{\nu\varepsilon} & A\protimes A \ar[d]^{\text{id}_A\protimes S}\\
A\protimes A \ar[r]^-{m}& A & A\protimes A \ar[l]_-{m}
}
$$
%\missingfigure{Coalgebra and Hopf algebra diagrams.}
where $m\colon A\protimes A\to A$ is the algebra multiplication, $\nu \colon K\to A$ is defined by $1\mapsto 1_A$, $\text{id}_A$ is the identity morphism, $r_A\colon A\protimes K\to A$ and $l_A\colon K\protimes A\to A$ are the isomorphims defined by $a\otimes 1\mapsto a$ and $1\otimes a\mapsto a$, respectively ($a\in A$). %the right and left auter multiplication respectively canonical isomorphisms 
\end{Def}

%Let $C$\todo{Изменить шрифт} be a monoidal category, i.e., a category equipped with a bifunctor $\otimes\colon C\times C\to C$, a neutral object $I$, and natural isomorphisms: $$\alpha_{X,Y,Z}\colon (X\otimes Y)\otimes Z\to X\otimes (Y\otimes Z), l_x\colon I\otimes X\to X, r_x\colon X\otimes I\to X$$ satisfying certain coherence conditions \todo{ссылка. Пирк-[46]?}. An albegra in $C$ is an object $A$ together with morphisms $\mu\colon A\otimes A\to A$ (multiplication) and $\nu\colon I\to A$(unit) such that the diagrams \missingfigure{Algebra diagrams} are commutative.

%Dually, a coalgebra is an object $C$ together with morphisms $\Delta\colon C\to C\otimes C$(comultiplication) and $\eta \colon C\to I$ (counit) such that the diagrams \missingfigure {Coalgebra Diagrams} are commutative.
%\todo{braided category?}
%A {\itshape bialgebra} in $C$ is the object $H$ with algebra and coalgebra structures, such that $\Delta\colon H\to H\otimes H$ and $\eta\colon I\to H$ are algebra gomomorphisms. Finally, {\itshape Hopf algebra} is a bialgebra $H$ together with a morphism $S\colon H\to H$(antipode), such that the diagram \missingfigure{Hopf diagram} is commutative.
Consider the algebra morphism $E\colon A\to A^e, \quad E=(1\otimes S)\Delta$. It defines a right $A-$module structure on $A^e$ by $u\cdot a=uE(a)$ ($u\in A^e, a\in A$). We denote this module by $A_E^e$. Thus $A^e_E$ is an $A^e$-$A-$bimodule with respect to the left action of $A^e$ given by multiplication. We also consider $K$ as a left $A-$module via $\varepsilon$.

\begin{Lemm}
\label{$A_E^e$}
Let $A$ be a Hopf Banach algebra with invertible antipode. Then there is an isomorphism of left $A^e-$modules $$A^e_E\protimes_A K\to A,\quad u\protimes1\mapsto m(u).$$
\end{Lemm}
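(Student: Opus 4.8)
The plan is to exhibit an explicit inverse of the proposed map and to check that both maps are morphisms of left $A^e$-modules. Write $\Delta a=\sum a_{(1)}\otimes a_{(2)}$ in Sweedler notation, so that $E(a)=\sum a_{(1)}\otimes S(a_{(2)})$, recall that the product in $A^e$ is $(a\otimes b)(c\otimes d)=ac\otimes db$, that $A$ is a left $A^e$-module via $(a\otimes b)\cdot x=axb$, and that $A^e_E\protimes_A K$ carries the left $A^e$-action $v\cdot(u\protimes\lambda)=(vu)\protimes\lambda$. First I would check that the multiplication descends to a well-defined bounded map $\Phi\colon A^e_E\protimes_A K\to A$, $\Phi(u\protimes\lambda)=\lambda\,m(u)$. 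Well-definedness amounts to $m(uE(c))=\varepsilon(c)\,m(u)$, which for $u=a\otimes b$ unwinds, using $(a\otimes b)E(c)=\sum ac_{(1)}\otimes S(c_{(2)})b$ and the antipode axiom $\sum c_{(1)}S(c_{(2)})=\varepsilon(c)1$, to $\sum ac_{(1)}S(c_{(2)})b=\varepsilon(c)ab$; left $A^e$-linearity of $\Phi$ is the identity $m(vu)=v\cdot m(u)$, immediate from associativity. Surjectivity is clear since $\Phi\big((x\otimes 1)\protimes 1\big)=x$.

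Next I would introduce the candidate inverse $\Psi\colon A\to A^e_E\protimes_A K$, $\Psi(x)=(x\otimes 1)\protimes 1$, which is visibly bounded and left $A^e$-linear, and observe that $\Phi\circ\Psi=\mathrm{id}_A$. The whole weight of the proof then falls on $\Psi\circ\Phi=\mathrm{id}$, i.e. on the congruence $(a\otimes b)\protimes 1=(ab\otimes 1)\protimes 1$ in $A^e_E\protimes_A K$. Since $(a\otimes b)\protimes 1=(a\otimes 1)\cdot\big((1\otimes b)\protimes 1\big)$ under the left action and $(a\otimes 1)(b\otimes 1)=ab\otimes 1$, it suffices to treat $a=1$, that is, to prove $(1\otimes b)\protimes 1=(b\otimes 1)\protimes 1$ for every $b\in A$.

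The crux, and the only place where invertibility of the antipode enters, is this last congruence. The defining relation of the module tensor product reads $uE(c)\protimes 1=\varepsilon(c)\,(u\protimes 1)$ for all $u\in A^e$, $c\in A$, so I would rewrite $1\otimes b$ as a combination of elements of the form $(p\otimes 1)E(q)$: the Hopf identities give, in $A^e$, $\sum (b_{(2)}\otimes 1)\,E\big(S^{-1}(b_{(1)})\big)=1\otimes b$, whose verification reduces, after expanding $E\circ S^{-1}$ via $\Delta\circ S^{-1}=(S^{-1}\otimes S^{-1})\circ\tau\circ\Delta$, to the $S^{-1}$-antipode relation $\sum c_{(2)}S^{-1}(c_{(1)})=\varepsilon(c)1$ together with the counit axiom. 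Applying $(-)\protimes 1$ and using the defining relation term by term yields $(1\otimes b)\protimes 1=\sum\varepsilon\big(S^{-1}(b_{(1)})\big)\,(b_{(2)}\otimes 1)\protimes 1=\big(\sum\varepsilon(b_{(1)})b_{(2)}\otimes 1\big)\protimes 1=(b\otimes 1)\protimes 1$, as desired. Conceptually this is the statement that $\Theta\colon A\protimes A\to A^e_E$, $\Theta(p\otimes q)=(p\otimes 1)E(q)$, is an isomorphism of right $A$-modules from $A\protimes A$, with $A$ acting by right multiplication on the second factor, onto $A^e_E$, the inverse being built from $S^{-1}$; tensoring $\Theta$ with $K$ over $A$ and collapsing the spectator factor via $A\protimes_A K\cong K$ recovers $A$, and the resulting composite is exactly $\Phi$. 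I expect the main obstacle to be precisely the bookkeeping of this Hopf-algebraic identity, together with checking that $\Theta$ and $\Theta^{-1}$, being composites of the bounded maps $m,\Delta,S,S^{-1},\varepsilon$, are bounded, so that the purely algebraic isomorphism passes to the completed tensor products.
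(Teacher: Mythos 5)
Your proof is correct, and it follows the same route as the paper, which does not spell out an argument but defers to Pirkovskii's Lemma 2.4; that proof likewise rests on the observation you make at the end, namely that $S^{-1}$ yields an inverse to $\Theta\colon A\protimes A\to A^e_E$, $p\otimes q\mapsto (p\otimes 1)E(q)$, so that $A^e_E$ is free as a right $A$-module and tensoring with $K$ over $A$ gives $A$. One cosmetic point: $\Psi$ is not ``visibly'' left $A^e$-linear (that is equivalent to the congruence $(a\otimes b)\protimes 1=(ab\otimes 1)\protimes 1$ you prove afterwards), but this is harmless since the inverse of a bijective bounded $A^e$-morphism with bounded inverse is automatically an $A^e$-morphism.
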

The proof of this lemma is the same as in the case $K=\mathbb{C}$ (see ~\cite[Lemma 2.4]{PirkStbFlat}).

\section{Results} \label{result}
Let $G$ be a discrete group, and let $l(G)=l^1(G)$ denote the space of all $K-$valued functions on $G$ that vanish at infinity. Each such function $f$ can be uniquely decomposed as $f=\sum_{g\in G} \alpha_g \delta_g$ with $\alpha_g\in K$, where $\delta_g$ is the function which is $1$ at $g$ and $0$ elsewhere. In what follows, we often identify $\delta_g$ with $g$. It is well known that $l(G)$ is a Banach space with respect to the norm $\|f\|=\text{max}|\alpha_g|$. Moreover, $l(G)$ is a Banach algebra under convolution, since 
\begin{multline*}
\|\sum_{g\in G}\alpha_g g\sum_{g\in G}\beta_g g\|=\max_g |\sum_{t\in G} \alpha_t \beta_{t^{-1}g}|\leq \\
\leq \max_{g,t} |\alpha_t\beta_{t^{-1}g}|\leq%\\ \leq \max_{g,t} |\alpha_t| |\beta_{t^{-1}g}|=
\|\sum_{g\in G}\alpha_g g\| \|\sum_{g\in G}\beta_g g\|.
\end{multline*}
Observe that $\delta_e$ is the identity in $l(G)$. Let $I_0$ be the ideal of functions $f=\sum_g \alpha_g g$ such that $\sum_{g\in G}\alpha_g=0$. We denote by $\mathbbm{1}$ the function that is identically $1$ on $G$. 
%Where it will not cause confusion we will denote the constant unit function\todo{правильно ли сформулировано?} in $l^\infty(G)$ by $\mathbbm{1}$. Let $\delta_g\in l(G)$ be the element with $\alpha_x=1$ and $\alpha_g=0$ if $g\neq x$. 

We can define a left action of $G$ on $l^\infty (G)$ by $(g\cdot f)(h)=f(g^{-1}h)$.

It is easy to see that $l(G)$ has a Hopf Banach algebra structure uniquely determined by 

\begin{align*}
\Delta\colon \delta_x\mapsto \delta_x\otimes\delta_x,\\
\varepsilon(f)=\sum_{g\in G}f(g),\\
Sf(g)=f(g^{-1}).
\end{align*}

\begin{Def}\label{DefJAmenable}
We say that the group $G$ is {\itshape Johnson $K-$amenable} if there exists a left-invariant $m\in (l^{\infty}(G))^*$ (i.e., $m(g\cdot f)=m(f)$ for any $g\in G$ and $f\in l^{\infty}(G)$) such that $m(\mathbbm{1})\neq 0$.
\end{Def}
Observe that if we replace $K$ by $\mathbb{C}$, then the Johnson amenability of $G$ is equivalent to the usual amenability. Indeed, take the measure on $G$ associated with $m$, and consider the functional associated with the variation of this measure. The resulting functional $|m|$ is clearly positive, left invariant, and $|m|(\mathbbm{1})\ne 0$. Hence $|m|/(|m|\mathbbm{1})$ is a left invariant mean.

\begin{Lemm} \label{mor}
Let $A$ be a Hopf Banach algebra. There is a morphism of left $A^e-$modules $$y\colon A^e_E\protimes_A A^{**}\to (A^e)^{**}$$ uniquely determined by $$u\otimes\beta\mapsto uE^{**}(\beta)\quad (u\in A^e, \beta\in A^{**}).$$
\end{Lemm}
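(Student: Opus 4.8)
The plan is to build the map in three stages: first define a bounded bilinear map on $A^e\times A^{**}$, then extend it to the completed tensor product and check that it annihilates the $A$-balancing relations, and finally verify $A^e$-equivariance. To set up the bilinear map I would fix the module structures explicitly. Regard $A$ as a left module over itself by left multiplication $L_a(b)=ab$; then $A^{**}$ carries the induced (bidual) left $A$-module structure under which $a\cdot\beta=L_a^{**}(\beta)$. Likewise $(A^e)^{**}$ is a left $A^e$-module, the action of $v\in A^e$ being $v\cdot\xi=\lambda_v^{**}(\xi)$, where $\lambda_v$ denotes left multiplication by $v$ on $A^e$. Since the module action $A^e\times(A^e)^{**}\to(A^e)^{**}$ is bounded bilinear and $E^{**}$ is bounded, the assignment $(u,\beta)\mapsto u\cdot E^{**}(\beta)$ is a bounded bilinear map $A^e\times A^{**}\to(A^e)^{**}$, hence induces a bounded linear map $\bar y\colon A^e\protimes A^{**}\to(A^e)^{**}$ on the completed tensor product.

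The heart of the argument is to show that $\bar y$ annihilates the closed subspace $J$ defining $A^e_E\protimes_A A^{**}$, i.e. that $\bar y(u\cdot a\otimes\beta)=\bar y(u\otimes a\cdot\beta)$ for $u\in A^e$, $a\in A$, $\beta\in A^{**}$. Recalling that $u\cdot a=uE(a)$ and using associativity of the $A^e$-action, this reduces to the single identity
$$E(a)\cdot E^{**}(\beta)=E^{**}(a\cdot\beta).$$
To prove it I would observe that $E$ is a morphism of left $A$-modules from $A$ (left regular) to $A^e$ equipped with the action $a\cdot u=E(a)u$: indeed $E(ab)=E(a)E(b)$ because $E$ is an algebra homomorphism. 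Passing to biduals, $E^{**}$ is then a morphism of left $A$-modules, so $E^{**}(a\cdot\beta)=\ell_a^{**}(E^{**}(\beta))$, where $\ell_a$ denotes left multiplication by $E(a)$ on $A^e$. Since $\ell_a=\lambda_{E(a)}$ as operators on $A^e$, their second adjoints coincide, whence $\ell_a^{**}(E^{**}(\beta))=E(a)\cdot E^{**}(\beta)$, which is exactly the required identity. Consequently $\bar y$ factors through the quotient and yields $y\colon A^e_E\protimes_A A^{**}\to(A^e)^{**}$.

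Finally I would check that $y$ is left $A^e$-linear: for $v\in A^e$ the left action on the tensor product is $v\cdot(u\otimes\beta)=vu\otimes\beta$, and $y(vu\otimes\beta)=(vu)\cdot E^{**}(\beta)=v\cdot\bigl(u\cdot E^{**}(\beta)\bigr)=v\cdot y(u\otimes\beta)$ by associativity of the module action; uniqueness is automatic since the elements $u\otimes\beta$ span a dense subspace. The main obstacle is the balancing identity of the previous paragraph: the care needed there lies in matching the three different bidual/module structures at play — the $A$-action on $A^{**}$, the $A$-action on $(A^e)^{**}$ induced through $E$, and the ambient $A^e$-action on $(A^e)^{**}$ — and confirming that $E^{**}$ intertwines them. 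Everything else is a formal consequence of the universal property of $\protimes_A$ and of boundedness, exactly as in the case $K=\mathbb{C}$; note in particular that invertibility of the antipode, required for Lemma~\ref{$A_E^e$}, plays no role here, since $E$ enters only as an algebra homomorphism.
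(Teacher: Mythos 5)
Your proposal is correct and follows essentially the same route as the paper: both reduce the well-definedness on $A^e_E\protimes_A A^{**}$ to the single identity $E^{**}(a\cdot\beta)=E(a)\cdot E^{**}(\beta)$, which ultimately rests on $E$ being an algebra homomorphism. The paper verifies this identity by unwinding the dual pairings explicitly (reducing to $E^*(\varphi)\cdot c=E^*(\varphi\cdot E(c))$ and evaluating on elements of $A$), whereas you obtain it by functoriality of second adjoints applied to the intertwining relation $E\circ L_a=\lambda_{E(a)}\circ E$; this is a cosmetic rather than substantive difference, and your closing remarks on uniqueness and on the antipode being irrelevant here are both accurate.
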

\begin{proof}
%Consider the morphism $E^{**}\colon A^{**}\to (A\protimes A)^{**}$, defined by $E^{**}(\beta)(\varphi)=\beta(E^*(\varphi))$ for each $\varphi \in (A\protimes A)^{**}$. And
Let $Y\colon A^e\times A^{**}\to (A^e)^{**}$ be the bilinear mapping defined by $$(u,\beta)\mapsto uE^{**}(\beta)\quad (u\in A^e,\; \beta\in A^{**}).$$ Let us prove that $Y$ defines a mapping $A^e_E\protimes_A A^{**}\to (A^e)^{**}$. It is sufficient to show that $$E^{**}(c\beta)=E(c)E^{**}(\beta)\quad (c\in A,\; \beta\in A^{**}).$$ We clearly have $$E^{**}(c\beta)(\varphi)=\beta(E^*(\varphi)\cdot c)\quad (c\in A,\; \beta\in A^{**}, \varphi\in (A^e)^*).$$ and $$(E(c)E^{**}(\beta))(\varphi)=E^{**}(\beta)(\varphi \cdot E(c))=\beta(E^*(\varphi\cdot E(c)))\quad (\varphi\in (A^e)^*).$$
Hence it is sufficient to prove that 
\begin{equation}
E^*(\varphi)\cdot c=E^*(\varphi\cdot E(c)).
\end{equation}
We have 
$$ (E^*(\varphi)\cdot c)(a)= E^*(\varphi)(ca) =\varphi(E(ca)),$$
and
$$ E^*(\varphi\cdot E(c))(a) = (\varphi\cdot E(c))(E(a)) =\varphi(E(c)E(a))=\varphi(E(ca))\quad (a\in A). $$
This implies (1), which in turn implies that $Y$ induces a continuous linear map $y$. Clearly, $y$ is a left $A^e-$module morphism.
%$$(E^*(\varphi)\cdot c)(a)=E^*(\varphi\cdot E(c))(a)$$
%$$E^*(\varphi)(ca)=(\varphi\cdot E(c))(E(a))$$
%$$\varphi (E(ca))=\varphi(E(c)E(a)).$$
%The last equations holds, since $E$ is a morphism of algebras. Hence the $y$ is well defined continious mapping. Clearly, it is also morhism of $A^e-$modules.
\end{proof}

\begin{Theorem}
$l(G)$ is amenable iff $G$ is Johnson $K-$amenable.
\end{Theorem}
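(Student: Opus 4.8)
The plan is to prove both implications by connecting Johnson $K$-amenability of $G$ to the homological criteria for amenability of $l(G)$ collected in the first Proposition, using the Hopf algebra machinery developed in Lemmas~\ref{$A_E^e$} and~\ref{mor}. Throughout I write $A=l(G)$; note that $A$ is unital (with identity $\delta_e$) and its antipode $S$ is invertible (indeed $S^2=\mathrm{id}$ since $f(g^{-1})^{-1}=f(g)$ at the group level, i.e.\ $Sg=g^{-1}$ and $S^2=\mathrm{id}$). The left $A$-module structure on $K$ is via $\varepsilon$, and $\varepsilon$ evaluated on $\mathbbm{1}$-type data is exactly the ``sum of coefficients'' map; I expect the invariant functional $m$ to appear as a splitting/virtual-diagonal-type object on the dual side.

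For the direction \emph{$G$ Johnson $K$-amenable $\Rightarrow$ $l(G)$ amenable}, I would aim to produce a virtual diagonal, i.e.\ verify criterion~\ref{VirtDiag} of the first Proposition. The key observation is that a left-invariant $m\in(l^\infty(G))^*$ with $m(\mathbbm{1})\neq0$ can be rescaled so that $m(\mathbbm{1})=1$, and that $l^\infty(G)$ is naturally identified with $A^*=l(G)^*$. I would use $m$, viewed through the isomorphism of Lemma~\ref{$A_E^e$} (which gives $A^e_E\protimes_A K\cong A$ as left $A^e$-modules) together with the morphism $y$ of Lemma~\ref{mor} (which maps $A^e_E\protimes_A A^{**}\to(A^e)^{**}$), to transport $m$ into an element of $(A\protimes A^{op})^{**}$. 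Concretely, the composite $A^e_E\protimes_A K\to A^e_E\protimes_A A^{**}\to(A^e)^{**}$, obtained by inserting $m$ (as a normalized element of $K\hookrightarrow A^{**}$ via $\varepsilon^{**}$ or via the invariant mean) should land the canonical element $1\protimes 1$ onto a virtual diagonal $M$. The invariance $m(g\cdot f)=m(f)$ is precisely what forces the diagonal condition $(a\otimes1)M=(1\otimes a)M$ for $a=\delta_g$ generating $A$, while $m(\mathbbm{1})=1$ gives the counit/augmentation normalization $\pi_0^{**}(M)a=a$.

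For the converse \emph{$l(G)$ amenable $\Rightarrow$ $G$ Johnson $K$-amenable}, I would run the construction backwards. Amenability of $A$ gives (again by criterion~\ref{VirtDiag}) a virtual diagonal $M\in(A\protimes A^{op})^{**}$. Applying a suitable evaluation/pairing built from $E$ and the counit, I would extract from $M$ a functional on $l^\infty(G)=A^*$; the diagonal relations translate back into left-invariance, and $\pi_0^{**}(M)\delta_e=\delta_e$ guarantees that the resulting functional does not vanish on $\mathbbm{1}$. Alternatively, and perhaps more cleanly, I would use the flatness formulation: by Proposition~\ref{flat} amenability makes every module flat, in particular $K$ (as a left $A$-module via $\varepsilon$), so by Proposition~\ref{1.14} the dual $K^*\cong K$ is injective; combined with Proposition~\ref{1.20} applied to the augmentation ideal $I_0=\ker\varepsilon$, I would obtain that $I_0$ has a right bounded approximate identity, and then a weak$^*$-limit argument (as in the proof of Proposition~\ref{1.20}, via Goldstine and the Bonsall--Duncan reference) produces the invariant functional $m$ with $m(\mathbbm{1})\neq0$.

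The main obstacle I anticipate is the \emph{non-Archimedean positivity gap}: in the classical complex case one passes freely between a left-invariant functional and a left-invariant \emph{mean} by taking the variation $|m|$ and normalizing, but over a field $K$ with non-Archimedean valuation there is no notion of positivity, which is exactly why the condition ``$m(\mathbbm{1})\neq0$'' replaces ``$m$ is a state.'' I must therefore be careful that every step uses only the algebraic/topological content (invariance plus non-vanishing on $\mathbbm{1}$) and never secretly invokes positivity or an order structure. A second delicate point is the interplay of the various double-dual maps $E^{**}$, $\pi_0^{**}$, and the weak$^*$ topology: spherical completeness of $K$ is what makes Goldstine's theorem and the duality Propositions~\ref{1.13}--\ref{1.14} available, so I would check that the bounded net extracted from the bidual genuinely converges weak$^*$ to an object on which invariance can be read off. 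Verifying that the identifications of Lemmas~\ref{$A_E^e$} and~\ref{mor} are compatible with the $A^e$-actions on the nose — so that the transported element really satisfies both virtual-diagonal identities simultaneously — is where the bulk of the careful bookkeeping will lie.
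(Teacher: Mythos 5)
Your plan for the direction ``$G$ Johnson $K$-amenable $\Rightarrow l(G)$ amenable'' is exactly the paper's argument: normalize $m$ so that $m(\mathbbm{1})=1$, observe that left invariance is precisely what makes $1\mapsto m$ a left $l(G)$-module morphism $K\to l(G)^{**}$, and push this morphism through the isomorphism of Lemma~\ref{$A_E^e$} and the morphism $y$ of Lemma~\ref{mor} so that $\delta_e$ lands on a virtual diagonal; the bimodule-morphism property then yields $(a\otimes1)M=(1\otimes a)M$ and the normalization yields $\pi_0^{**}(M)=\delta_e$, so criterion~\ref{VirtDiag} applies. For the converse your ``cleaner'' alternative also begins as the paper does (amenability makes $K$ flat by Proposition~\ref{flat}, hence $K^*\cong K$ is injective by Proposition~\ref{1.14}), but you then detour through Proposition~\ref{1.20}, a right bounded approximate identity for $I_0$, and a weak$^*$-limit. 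That last step is the one genuinely shaky point: over a non-Archimedean field a bounded net in a bidual need not have a weak$^*$ cluster point (there is no Alaoglu-type compactness to invoke), and your other alternative --- reading an invariant functional directly off the virtual diagonal --- is left unspecified. The paper avoids both issues: once $K$ is injective, Proposition~\ref{1.13} applied to the admissible monomorphism $K\hookrightarrow l^\infty(G)$, $1\mapsto\mathbbm{1}$ (split over $K$ by evaluation at $e$), immediately produces a right $l(G)$-module left inverse $f$, and the identities $g\varphi=\varphi\cdot\delta_{g^{-1}}$ and $f(\varphi\cdot\delta_{g^{-1}})=f(\varphi)\,\varepsilon(\delta_{g^{-1}})=f(\varphi)$ show that $f$ is left invariant with $f(\mathbbm{1})=1$. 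You already have every ingredient for this shortcut; replace the approximate-identity detour with it and the proof closes.
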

\begin{proof}
 Suppose $l(G)$ is amenable. By Propositions \ref{1.14} and \ref{flat}, $K\cong K^*$ is injective as a right $l(G)-$module. So Proposition \ref{1.13} implies that the mapping $K \hookrightarrow l^{\infty}(G)$ defined by $1\mapsto \mathbbm{1}$ has a left inverse $l(G)-$module morphism $f\colon\ell^\infty(G)\to K$.  Given $h\in l(G)$, define $\tilde h\in l(G)$ by $\tilde h(s)=h(s^{-1})$. For each $\varphi\in l^\infty(G)$ and each $h\in l(G)$ we have $\varphi\cdot h=\tilde h\star\varphi$. Hence for each $g\in G$ we have $g\varphi=\delta_g\star\varphi=\varphi\cdot\delta_{g^{-1}}$. Then $$f(g\varphi)=f(\varphi\cdot \delta_{g^{-1}})=f(\varphi)\cdot \delta_{g^{-1}}=f(\varphi),$$ for each $\varphi\in l^\infty(G)$, i.e., the functional $f$ is left invariant. Clearly, $f(\mathbbm{1})\neq 0$. Hence $G$ is Johnson $K-$amenable.

 Conversely, suppose $G$ is Johnson $K-$amenable, i.e., there exists a left invariant functional $f$ on $l^\infty (G)$ such that $f(\mathbbm{1})\ne0$. Let $m=f(\mathbbm{1})^{-1}f$. 
Then there exists a left $l(G)-$module morphism $z\colon l(G)/I_0\cong K\to l(G)^{**}$ defined by $1\mapsto m$.

To prove that $l(G)$ is amenable, it is sufficient to construct a morphism of $l(G)-$bimodules $d\colon l(G)\to (l(G)\protimes l(G)^{op})^{**}$ such that for all $a\in l(G)$ we have $a\cdot \pi_0^{**}(d(\delta_e))=\pi_0^{**}(d(\delta_e))\cdot a=a$. Indeed, if $d$ satisfies the above conditions, then $d(\delta_e)$ is a virtual diagonal, so $l(G)$ is an amenable algebra by Proposition 1.

Consider the image of $z$ under the functor $l(G)^e_E\protimes_{l(G)}(-)$: $$\tilde z\colon l(G)^e_E\protimes_{l(G)} K\to l(G)^e_E\protimes_{l(G)} l(G)^{**}.$$ By Lemma~\ref{$A_E^e$} we have a left $l(G)^e$-module isomorphism
%Since $l(G)$ is a Banach Hopf algebra and from the lemma~\ref{$A_E^e$} the left side is isomorphic to $l(G)$ as a left $l(G)-$module.
$$l(G)^e_E\protimes_{l(G)} K\cong l(G).$$ 
Let 
%From the Lemma~\ref{mor} there is a morphism of left $l(G)^e-$modules, which maps the right side to $(l(G)^e)^{**}$: 
$$y\colon l(G)^e_E\protimes_{l(G)}l(G)^{**}\to (l(G)\protimes l(G)^{op})^{**}$$ be the morphism constructed in Lemma~\ref{mor}. Hence we obtain a morphism $$d=y\tilde z\colon l(G)\to (l(G)\protimes l(G)^{op})^{**}.$$ 
Let $\nu$ and $\varepsilon$ denote the unit and the counit of $l(G)$, respectively. We have
\begin{multline*}
\pi_0^{**}(d(\delta_e))=\pi_0^{**}(y(\delta_e\otimes\delta_e\otimes m))=\pi_0^{**}((\delta_e\otimes\delta_e)E^{**}(m))=\\ =\pi_0^{**}(E^{**}(m))=(\pi_0 E)^{**}(m)=(\nu\varepsilon)^{**}(m)=\nu^{**}\varepsilon^{**}(m)=m(\mathbbm{1}) \delta_e=\delta_e.
\end{multline*}
This clearly implies that $\pi_0^{**}(d(\delta_e))\cdot a=a\cdot \pi_0^{**}(d(\delta_e))=a$ for all $a\in l(G)$, whence $d(\delta_e)$ is a virtual diagonal. This completes the proof.
%-----
%We need to show that $\pi_0^{**}(d(\delta_e))\cdot a=a=a\cdot \pi_0^{**}(d(\delta_e)).$
%\begin{multline*}
%\pi_0^{**}(d(\delta_e))=\pi_0^{**}(y(\delta_e\otimes\delta_e\otimes m))=\pi_0^{**}((\delta_e\otimes\delta_e)E^{**}(m))=\pi_0^{**}(E^{**}(m))=\\ =(\pi_0 E)^{**}(m)=(\nu\varepsilon)^{**}(m)=\nu^{**}\varepsilon^{**}(m)=m(\mathbbm{1}) \delta_e=\delta_e,
%\end{multline*}
%where $\nu$ and $\varepsilon$ are the unit and the counit in $l(G)$ respectively.
\end{proof}

In conclusion, let us say a few words about amenability  in the sense of Schikhof. In \cite{NarchAm}, Schikhof gave the following definition.
\begin{Def}\label{SchAm}
A locally compact group $G$ is {\itshape $K-$amenable} if there exists a $K-$linear functional $m\colon C_b(G)\to K$ such that $m$ is left-invariant, $m(\mathbbm{1})=1$, and $\|m\|\leq 1$.

\end{Def}
To avoid confusion, we say that $G$ is {\itshape Schikhof $K-$amenable} if it is $K-$amenable  in the sense of Definition~\ref{SchAm}. Schikhof have shown that the group $G$ is Schikhof $K-$amenable iff it satisfies the follwing conditions:
\begin{enumerate}
\item For each finite set $S\subset G$ there exists a compact subgroup $N\subset G$ such that $S\subset N$. \label{torsional}
\item For every pair of open subgroups $S_1\subset S_2$ the number $\left[S_2\colon S_1\right]$ is not divisible by the characteristic of $k$ (where $k$ is the residue class field of $K$).\label{char-free}
\end{enumerate}
The following result is in fact contained in Schikhof's proof of the above criterion.
\begin{Theorem}[Schikhof]
A discrete group $G$ is Johnson $K-$amenable iff it satisfies condition~\ref{torsional}.
\end{Theorem}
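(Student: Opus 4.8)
My plan is to reduce everything to the coboundary space and then treat the two implications separately. Since $G$ is discrete, a compact subgroup is the same as a finite one, so condition~\ref{torsional} says exactly that $G$ is locally finite (every finite subset, equivalently every finitely generated subgroup, generates a finite subgroup). Throughout I would work with the subspace $W\subseteq l^\infty(G)$ spanned by the coboundaries $f-g\cdot f$ ($g\in G$, $f\in l^\infty(G)$): a continuous left-invariant functional is precisely one that annihilates $W$, so $G$ is Johnson $K$-amenable iff some continuous functional kills $W$ but not $\mathbbm 1$, i.e. iff $\mathbbm 1\notin\overline W$.

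For the implication (torsional) $\Rightarrow$ Johnson $K$-amenable I would show $\mathbbm 1\notin\overline W$ and then invoke the non-Archimedean Hahn--Banach theorem (available because $K$ is spherically complete) to extend the functional $\lambda\mathbbm 1+w\mapsto\lambda$ from $K\mathbbm 1\oplus\overline W$ to all of $l^\infty(G)$. To bound $\mathrm{dist}(\mathbbm 1,W)$ below I would test against the averaging functionals $m_N(f)=|N|^{-1}\sum_{g\in N}f(g)$ attached to the finite subgroups $N$ of $G$: every element of $W$ involves only finitely many translations and therefore lies in the $N$-coboundaries for some finite $N$, so $m_N$ kills it, whence $\|\mathbbm 1-w\|\ge|m_N(\mathbbm 1)|/\|m_N\|$, which equals the absolute value of the integer $|N|$ in $K$. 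Provided these absolute values stay bounded away from $0$ as $N$ grows, we obtain $\mathrm{dist}(\mathbbm 1,W)>0$ and hence an invariant functional $m$ with $m(\mathbbm 1)\ne0$.

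For the converse I would argue contrapositively, first recording that Johnson $K$-amenability passes to subgroups: for $H\le G$ with a right transversal $T$, the map $\iota\colon l^\infty(H)\to l^\infty(G)$, $(\iota\varphi)(ht)=\varphi(h)$, is an isometric, $H$-equivariant embedding with $\iota(\mathbbm 1)=\mathbbm 1$, so precomposing an invariant functional on $l^\infty(G)$ with $\iota$ yields one on $l^\infty(H)$ of the same value on $\mathbbm 1$. If $G$ violates condition~\ref{torsional}, some finite set generates an infinite subgroup $H$, and it suffices to prove that $H$ is not Johnson $K$-amenable, i.e. that $\mathbbm 1_H\in\overline{W_H}$. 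The decisive case is that of an element $g$ of infinite order: on $\langle g\rangle\cong\mathbb Z$ the function $u\colon g^n\mapsto n\cdot1_K$ is bounded, since $|n\cdot1_K|\le1$ by the ultrametric inequality, while $u-g\cdot u=\mathbbm 1$ identically, so $\mathbbm 1$ is an exact coboundary. This is the genuinely non-Archimedean point --- the primitive of the constant function is bounded, whereas over $\mathbb C$ it is not, which is exactly why $\mathbb Z$ is amenable in the classical setting but not here.

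I expect the main obstacle to be the remaining case of an infinite, finitely generated torsion group, for which there is no element of infinite order and hence no single bounded primitive. Here I would try to manufacture approximate coboundaries for $\mathbbm 1$ out of the coboundaries attached to the finite subgroups of $H$, and the crux is to force the resulting error to tend to $0$ in norm. I anticipate that this step --- together with the boundedness of the quantities $\|m_N\|^{-1}$ needed in the reverse direction --- is precisely where the arithmetic of $K$ through its residue field (condition~\ref{char-free}) has to intervene, and where one must rely on Schikhof's structural analysis rather than on a soft functional-analytic argument.
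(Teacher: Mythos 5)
First, a caveat: the paper does not actually prove this statement --- it is attributed to Schikhof and described as ``contained in'' the argument of \cite{NarchAm} --- so there is no in-paper proof to compare yours against, and I can only assess your proposal on its own terms. Your framework is the right one: over a spherically complete $K$ the non-Archimedean Hahn--Banach (Ingleton) theorem is available, so Johnson $K$-amenability is indeed equivalent to $\mathbbm{1}\notin\overline{W}$; your hereditary lemma (precomposition with the $H$-equivariant isometric embedding $l^\infty(H)\to l^\infty(G)$ built from a transversal) is correct; and the infinite-order case of the converse (the bounded primitive $g^n\mapsto n\cdot 1_K$, spread over a transversal of $\langle g\rangle$, exhibits $\mathbbm{1}$ as an exact coboundary) is correct and is precisely the non-Archimedean phenomenon that disposes of $\mathbb{Z}$.

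Both of the points you flag as uncertain are, however, genuine gaps, and the first cannot be closed as the statement stands. Your estimate $\|\mathbbm{1}-w\|\ge |m_N(\mathbbm{1})|/\|m_N\|=\bigl|\,|N|\,\bigr|_K$ is correct, but it is sharp in the wrong direction: if $T$ is a right transversal of a finite subgroup $N$ and $F=\mathbbm{1}_T$, then $\sum_{g\in N}g\cdot F=\mathbbm{1}$, hence $\mathbbm{1}-\sum_{g\in N}(g\cdot F-F)=|N|\,F$ and $\mathrm{dist}(\mathbbm{1},W)\le\bigl|\,|N|\,\bigr|_K$. Thus for a locally finite group $\mathrm{dist}(\mathbbm{1},W)=\inf_N\bigl|\,|N|\,\bigr|_K$, and when the residue characteristic of $K$ is $p>0$ and $G$ contains finite $p$-subgroups of unbounded order (e.g.\ $G=\mathbb{Z}(p^\infty)$ over $K=\mathbb{Q}_p$) this infimum is $0$, so $\mathbbm{1}\in\overline{W}$ and $G$ is \emph{not} Johnson $K$-amenable even though it satisfies condition~\ref{torsional}. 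So the forward implication requires the extra hypothesis $\inf_N\bigl|\,|N|\,\bigr|_K>0$ (automatic when the residue characteristic is $0$); you should either add it or re-examine how condition~\ref{torsional} has been transcribed from \cite{NarchAm}. The second gap --- finitely generated infinite torsion groups --- is also real and non-vacuous (Golod--Shafarevich, Grigorchuk). For those that are non-amenable over $\mathbb{C}$, a Tarski paradoxical decomposition writes $\mathbbm{1}$ exactly as a sum of coboundaries with coefficients $\pm 1$, which is valid over any $K$; but for $\mathbb{C}$-amenable examples such as the Grigorchuk group neither your methods nor anything in this paper settles the question, and I do not see how to finish that case without importing Schikhof's structural analysis wholesale.
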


Observe that Schikhof amenability is a stronger condition than Johnson amenability. For example, if $K=\mathbb Q_p$ and $G=\mathbb Z/p\mathbb Z$, then $G$ satisfies condition \ref{torsional} but does not satisfy condition \ref{char-free}.

%\section{Acknowledgments}
%I want to thank Prof. A.Yu.Pirkovskii for the help that he is giving me during the writing of this article.

%is a stronger condition than
%\begin{Def}
%Group $G$ called {\itshape torsional} if for each finite set $S\subset G$ there exists a compact subgroup $K\subset G$ such that $S\subset K$.
%\end{Def}


\begin{thebibliography}{99}
\bibitem{Bonsall-Duncan}
{F. F.} Bonsall and {J.} Duncan,
{\itshape Complete Normed Algebras},
Ergebnisse der Mathematik und ihrer Grenzgebiete, Band {\bfseries 80},
Springer, New York, Heidelberg, Berlin, 1973.

\bibitem{CartanEilenberg}
{H.} Cartan and {S.} Eilenberg,
{\itshape Homological algebra},
Princeton Univ. Press,
Princeton, NJ,
1956.

\bibitem{Diarra}
{B.} Diarra,
Alg\`ebres de Hopf et fonctions presque p\'eriodiques ultram\'etriques,
{\itshape Rivista di Matematica pura ed applicata} {\bfseries 17} (1996), 113-132.

\bibitem{HelHomBanEng}
{A. Ya.} Helemskii,
{\itshape The Homology of Banach and Topological Algebras},
Mathematics and its Applications (Soviet Series) {\bfseries 41},
Kluwer Academic Publishers Group, Dordrecht, 
1989.

\bibitem{HelSheinberg}
{A. Ya.} Helemskii and {M. V.} Sheinberg,
Amenable Banach algebras,
{\itshape Func. anal. i pril.} {\bfseries 13} (1979), 42-48; {\itshape Func. Analysis Appl.} {\bfseries 13} (1979), 32-37.

\bibitem{APPRDIAG}
{B. E.} Johnson,
Approximate diagonals and Cohomology of Certain Annihilator Banach algebras,
{\itshape Amer. J. Math.} {\bfseries 94} (1972), no. 3, 685-698.

\bibitem{Johnson}
{B. E.} Johnson,
{\itshape Cohomology in Banach Algebras},
Mem. Amer. Math. Soc. {\bfseries 127}, 
American Mathematical Society,
Providence, R.I., 
1972.

\bibitem{LCS_NArch}
{C.} Perez-Garcia and {W. H.} Schikhof,
{\itshape Locally Convex Spaces over Non-Archimedean Valued Fields},
Cambridge studies in advanced mathmatics {\bfseries 119},  
Cambridge University Press, Cambridge,
2010.

\bibitem{PirkStbFlat}
{A. Yu.} Pirkovskii,
Stably flat completions of universal enveloping algebras,
{\itshape Diss. Math.} {\bfseries 441} (2006), 1-60.

\bibitem{vanRooj}
{A. C. M.} van Rooj,
{\itshape Non-Archimedean functional analysis},
Marcel Dekker, New York, 
1978.

\bibitem{NarchAm}
{W. H.} Schikhof,
Non-archimedean invariant means,
{\itshape Compositio Mathematica} {\bfseries 30} (1975), no. 2, 169-180.

\end{thebibliography}
\end{document}